\newtheorem{theorem}{Theorem}[section]
\newtheorem{lemma}[theorem]{Lemma}
\newtheorem{proposition}[theorem]{Proposition}
\newtheorem{corollary}[theorem]{Corollary}
\theoremstyle{definition}
\newtheorem{definition}[theorem]{Definition}
\newtheorem{example}[theorem]{Example}
\theoremstyle{remark}
\newtheorem{remark}[theorem]{Remark}
\numberwithin{equation}{section}
\begin{document}
\setcounter{page}{1}

\title[Element absorb topology on rings]{\textbf{Element absorb topology on rings}}

\author[ Ali Shahidikia]{ Ali Shahidikia }
\address{Department of Mathematics, Dezful branch, Islamic Azad University, Dezful, Iran}
\email{\textcolor[rgb]{0.00,0.00,0.84}{ali.Shahidikia@iaud.ac.ir}}


\begin{abstract}
In this paper, we introduce a new Topology related to special elements in a noncummutative rings. Consider a ring $R$, we denote by
$\textrm{Id}(R)$ the set of all idempotent elements in $R$.  Let $a$ is an element  of $R$. The element absorb Topology related to $a$ is defined as $\tau_a:=\{ I\subseteq R | Ia \subseteq I\} \subseteq P(R)$. Since this topology is obtained
from act of ring, it explains Some of algebraic properties of ring in Topological language .In a special case when $e$ ia an idempotent element, 
$\tau_e:=\{ I\subseteq R | Ie \subseteq I\} \subseteq P(R)$. We present Topological description of the pierce decomposition  $ R=Re\oplus R(1-e)$.  
\end{abstract}

\maketitle

\section{Introduction}
Throughout this article, all rings are associative with identity. and subrings have the same identity as the overring,
unless indicated otherwise; $R$ denotes such a ring. For any  projection invariant left ideal $Y$ (i.e.  $ Ye \subseteq Y$  for all $e=e^2$ ,$e \in R$) (see \cite{B1, B2,B3} for more details).  We motivated by this concept to define a Topology on a ring  such that  $Y \in \tau_e$.

\section{Basic Results}

In this section, we introduce the projection Invariant  Topology concept, develop fundamental results and make connections with related  algebric notions and topologic notions.
\begin{definition}
	Let $R$ be a ring and $a\in R$. The element absorb Topology related to $a$ is defined by
 $\tau_a:=\{ I\subseteq R | Ia \subseteq I\} \subseteq P(R)$.
\end{definition}
\begin{proposition}
	$(R,\tau_a)$ is a topological space. 
\end{proposition}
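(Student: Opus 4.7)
The plan is to verify the three standard topology axioms for $\tau_a$, each reducing to a direct computation with the set product $Ia$.

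First I would dispatch the extremes: $\emptyset a = \emptyset \subseteq \emptyset$ and $Ra \subseteq R$ since $R$ is closed under multiplication, so both $\emptyset$ and $R$ lie in $\tau_a$. This is immediate from the definition and needs no further comment.

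Next I would handle arbitrary unions. Given a family $\{I_\lambda\}_{\lambda \in \Lambda} \subseteq \tau_a$, I would use the elementary set-theoretic identity
\[
\Bigl(\bigcup_{\lambda \in \Lambda} I_\lambda\Bigr) a \;=\; \bigcup_{\lambda \in \Lambda} (I_\lambda a)
\]
and then note that each $I_\lambda a \subseteq I_\lambda \subseteq \bigcup_\mu I_\mu$, so the union of these is contained in $\bigcup_\lambda I_\lambda$. Hence the union belongs to $\tau_a$.

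Finally, for finite intersections, it suffices to treat two sets $I, J \in \tau_a$ and iterate. I would argue pointwise: if $x \in I \cap J$, then $xa \in Ia \subseteq I$ and $xa \in Ja \subseteq J$, so $xa \in I \cap J$, giving $(I \cap J)a \subseteq I \cap J$. An arbitrary finite intersection then follows by induction.

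There is no real obstacle here; the only subtle point is that the set product $Ia$ is well-behaved under unions (distributivity of left multiplication by $a$ over unions of subsets) but only satisfies an inclusion $(I \cap J)a \subseteq Ia \cap Ja$ rather than equality. That asymmetry is exactly why unions pass through cleanly while intersections must be checked elementwise, and it is the one place I would be careful to phrase the argument correctly.
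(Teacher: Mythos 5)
Your proof is correct and follows essentially the same route as the paper: both verify $\emptyset, R \in \tau_a$, push $a$ through arbitrary unions via the identity $(\bigcup I_\lambda)a = \bigcup (I_\lambda a)$, and use the inclusion $(\bigcap I_\lambda)a \subseteq \bigcap (I_\lambda a) \subseteq \bigcap I_\lambda$ for finite intersections (your pointwise phrasing of the latter is just a more explicit rendering of the same containment). Your closing remark about the asymmetry between unions and intersections is a nice touch but does not change the substance.
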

\begin{proof}
	\begin{enumerate}
		\item
		It is obvious that $R, \phi \in \tau_a$
		\item
		For all $\Omega_\alpha \in \tau_a$, $\alpha \in I$, we have $\cup \Omega_\alpha \in \tau_a$ since 
		$\Omega_\alpha \in \tau_a$. As $\Omega_\alpha a \subseteq\Omega_\alpha $, then
		$(\cup U_\alpha)a=\cup(U_\alpha a)\subseteq \cup U_\alpha $.
		\item
		For all $i=1,2,\ldots, n$, $\Omega_i \in \tau_a$, we have $\cap \Omega _i \in \tau_e$ since
		$\Omega_ia \subseteq \Omega_i $. As $\Omega_i \in \tau_a$, then $( \cap \Omega_i) a \subseteq \cap(\Omega_i a)\subseteq \cap \Omega_i$.
	\end{enumerate}
\end{proof}
It is trivial that every ideal and right ideal is an open subset of $(R,\tau_a)$.
\begin{definition}
	Let $R$ be a ring, $A$ be a subset of $R$, and $x\in R$.  $x$ is called  a \textit{limited point} of  $A$,  if for all $I \in \tau_a$ contain  $x$, there exists  $y \in A\setminus \{x\}\}$ such that $ya^n \in I$ .
\end{definition}
\begin{remark}
	In $(R,\tau_a)$, any union (intersection) of closed (open) sets is closed (open) .
\end{remark}

For all $A \subseteq R$ let $\hat{A}:=\{x \in R | xa \in A\}$. So for $I \in \tau_a$ we have $I \subseteq \hat{I}$ and for $F^c \in \tau_a$, $\hat{F } \subseteq F$.
\begin{theorem}
	The function $f:(R,\tau_a) \rightarrow (R,\tau_a) $ with  $ f(x)=xa $   is continious.   
\end{theorem}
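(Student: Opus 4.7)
The plan is to prove continuity directly from the definition: show that for every open $U \in \tau_a$, the preimage $f^{-1}(U)$ lies in $\tau_a$. First I would observe that $f^{-1}(U)$ is exactly the set $\hat{U} = \{x \in R \mid xa \in U\}$ introduced just before the theorem, so the whole task reduces to showing $\hat{U} \in \tau_a$, i.e.\ $\hat{U}\,a \subseteq \hat{U}$.

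Next I would unfold this membership: pick an arbitrary $x \in \hat{U}$ and verify $xa \in \hat{U}$. By definition $x \in \hat{U}$ means $xa \in U$. Since $U$ is open we have $Ua \subseteq U$, hence $(xa)a = xa^2 \in U$, which is precisely the statement $xa \in \hat{U}$. Thus $\hat{U}a \subseteq \hat{U}$, so $\hat{U} \in \tau_a$, and continuity follows since $U$ was arbitrary.

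There is really no substantive obstacle here: the argument is a one-line unwinding of the definitions, and the key mechanism is simply that the openness condition $Ua \subseteq U$ is stable under pulling back along right multiplication by $a$. The only thing to be careful about is keeping the two roles of $a$ straight — one occurrence comes from the map $f$ and the other from the openness condition defining $\tau_a$ — and to note that the remark $I \subseteq \hat{I}$ for $I \in \tau_a$ is consistent with (but not needed for) this argument. If desired, I would also remark at the end that by the same reasoning $f^{-1}$ sends closed sets to closed sets, since $\hat{F} \subseteq F$ for closed $F$ combined with $Fa^c$-type arguments is not needed; continuity alone suffices.
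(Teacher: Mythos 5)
Your proof is correct, and it uses the same central object as the paper, namely the set $\hat{U}=\{x\in R\mid xa\in U\}=f^{-1}(U)$ introduced just before the theorem. The difference is in how the two arguments are organized, and yours is actually the more complete one. The paper argues pointwise: given an open $B$ containing $f(x)$, it notes $x\in\hat{B}$ and $f(\hat{B})=\hat{B}a\subseteq B$, and stops there --- it never verifies that $\hat{B}$ is itself open, which is exactly what a neighborhood-style continuity argument requires. You instead use the global characterization (preimages of open sets are open) and supply the missing verification: if $x\in\hat{U}$ then $xa\in U$, and since $Ua\subseteq U$ you get $xa^2\in U$, i.e.\ $xa\in\hat{U}$, so $\hat{U}a\subseteq\hat{U}$ and $\hat{U}\in\tau_a$. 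That one line is the substantive content of the theorem, and it is the step the paper's proof glosses over. Your closing aside about closed sets is garbled and adds nothing (the final sentence does not parse as a mathematical claim), so I would simply delete it; the core argument stands on its own.
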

 \begin{proof}
	Let $B$ be an open set contain $f(x)$, then  $xa \in B$, so $x \in \hat{B}$ and $f(x) \in f( \hat{B})=\hat{B}a \subseteq B$.  
\end{proof}
\begin{definition}
	Consider the Topological space $(R,\tau_a)$. we denote by $\mathrm{O}(x)$
	the set	 $\{ x,xa, \ldots \}$. 
\end{definition}
 It is obvious that $\mathrm{O}(x) \in \tau_a$.
This means that for any $x \in R$, $\mathrm{O}(x)$ is an open set
including $x$. Also $\mathrm{O}(x)$ is included in any open set including $x$. For any subset $A$ of $R$, $\mathrm{O}(A)=\cup_{x \in A} \mathrm{O}({x})$.

For an idempotent $e \in R  $ the pierce decomposition of $R$ related to $e$ is $R=Re\oplus R(1-e)$. If $x \in Re$, then $\mathrm{O}(x)=\{ x\}$, and if 
$x\in R(1-e)$, then $\mathrm{O}(x)=\{x,0\}$ . Since $\mathrm{O}(x)e \subseteq \mathrm{O}(x)$, then $\mathrm{O}(x) \in \tau_e$  is the smallest open set including $x$. \\

The \textit{interior} of a set  $A \subseteq R$ is the union of all open sets contained in $A$; equivalently the interior of $A$ is the largest open set contained in $A$.  We use  $\mathrm{Int}(A)$ to denote this set. Obviously, a set is open if and only if it is equal to its own interior.

\begin{remark}
	Consider the Topological space $(R,\tau_a)$. Then we have the following.
	\begin{enumerate}
		\item
		For any $a \in R$, $\mathrm{O}(x)$  is the smallest open set including $x$. This means $\mathrm{O}( x)$ is the intersection of all open sets including $x$;
		\item
		The set $\{ \mathrm{O}(x) | x \in R\}$ is a base for $(R,\tau_a)$;
		\item
		If $x \in Re$ then $\mathrm{O}(x)=\{x\}$;
		\item
		If $x \in R(1-e)$ then $\mathrm{O}(x)=\{x,0\}$.
	\end{enumerate}
\end{remark}
\begin{theorem}
	Topological space $(R,\tau_a)$ has $T_0$ property  if and only if for any $x,y\in R$,	$x\not =y$, either $x\not = ya^n  $ or $y\not = xa^n$, $n=1,2,3, \ldots$.
\end{theorem}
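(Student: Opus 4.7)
The plan is to leverage the key fact established in the preceding remark: $\mathrm{O}(x) = \{x, xa, xa^2, \ldots\}$ is the smallest open set in $(R,\tau_a)$ containing $x$. Because of this, checking the $T_0$ separation axiom reduces to checking inclusions among these explicit sets. Specifically, $T_0$ fails at a pair $(x,y)$ precisely when every open neighbourhood of $x$ contains $y$ and vice versa, which by minimality of $\mathrm{O}(\cdot)$ is equivalent to $y \in \mathrm{O}(x)$ and $x \in \mathrm{O}(y)$ simultaneously. Translating membership in $\mathrm{O}(x)$ into the presence of an exponent $n \geq 1$ with $y = xa^n$ (using $x \neq y$ to discard the $n=0$ term) will turn the whole question into a purely algebraic statement about powers of $a$.

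For the forward direction, I would assume $(R,\tau_a)$ is $T_0$, take $x \neq y$, and pick an open $U$ containing exactly one of them. If $x \in U$, $y \notin U$, then $\mathrm{O}(x) \subseteq U$ gives $y \notin \mathrm{O}(x)$, whence $y \neq xa^n$ for all $n \geq 1$; the reversed case yields $x \neq ya^n$ for all $n \geq 1$. In either case the stated disjunction holds.

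For the converse, I would assume the disjunction and produce a separating open set. Given $x \neq y$, without loss of generality suppose $y \neq xa^n$ for every $n \geq 1$; combined with $y \neq x$, this says $y \notin \mathrm{O}(x)$. Hence $\mathrm{O}(x)$ itself is an open neighbourhood of $x$ missing $y$, verifying $T_0$ at this pair.

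The argument is almost bookkeeping once the characterisation of $\mathrm{O}(x)$ as the minimal open neighbourhood is in hand, so I do not expect a serious obstacle. The only delicate point is matching the logical structure of the hypothesis (a disjunction of two universally quantified non-equalities) with the case split furnished by $T_0$-separation, and being careful that the $n=0$ term $x = xa^0$ is harmlessly excluded by the running assumption $x \neq y$.
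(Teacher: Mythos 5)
Your proof is correct and rests on the same core idea as the paper's: an open set containing $x$ must contain every $xa^n$, so separation of $x$ and $y$ comes down to whether each lies in the other's minimal neighbourhood $\mathrm{O}(\cdot)$. In fact your write-up is more complete than the paper's, which only argues the forward direction (from $T_0$ to the disjunction) and omits the converse entirely; your observation that $y\neq xa^n$ for all $n\geq 1$ together with $y\neq x$ makes $\mathrm{O}(x)$ itself a separating neighbourhood is exactly the missing half.
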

\begin{proof}	
	consider $x,y \in R$ with $x\not =y$. Then there	is $(G \in \tau_a)$ such that either $x \in G$, $y \in R \diagdown G$  or $y \in G$, $x \in R \diagdown G$.  So $xa^n \in G$, $y \not = xa^n$, or
	$x \in G$, $y \in R \diagdown G$. Hence $ya^n \in G$, $x \not =ya^n$.
\end{proof}
\begin{theorem}
	Topological space $(R,\tau_a)$ has $T_1$  property if and only if for any $x,y\in R$ 
	$x\not =y$,  $x\not = ya^n  $ and $y\not = xa^n$, $n=1,2,3, \ldots$.
\end{theorem}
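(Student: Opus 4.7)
The plan is to leverage the characterization already established in the preceding remark: for any $x \in R$, the set $\mathrm{O}(x) = \{x, xa, xa^2, \ldots\}$ is the smallest open set containing $x$. This reduces the verification of the $T_1$ axiom to a condition about membership in these specific sets, which in turn translates directly into the equations $y = xa^n$ and $x = ya^n$.

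First I would recall the standard reformulation: a topological space is $T_1$ if and only if, for every pair of distinct points $x, y$, there exists an open set containing $x$ but not $y$, and an open set containing $y$ but not $x$. Using that $\mathrm{O}(x)$ is contained in every open neighborhood of $x$, this is equivalent to requiring $y \notin \mathrm{O}(x)$ and $x \notin \mathrm{O}(y)$ whenever $x \neq y$. Unfolding the definition of $\mathrm{O}(\cdot)$, the first says $y \neq xa^n$ for every $n \geq 0$, and the second says $x \neq ya^n$ for every $n \geq 0$; the case $n=0$ is absorbed by the hypothesis $x \neq y$, leaving exactly the conditions stated in the theorem.

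For the forward direction, I would assume $(R, \tau_a)$ is $T_1$ and pick $x \neq y$. Choose an open $U$ with $x \in U$ and $y \notin U$; since $\mathrm{O}(x) \subseteq U$, the element $y$ cannot lie in $\mathrm{O}(x)$, so $y \neq xa^n$ for every $n \geq 1$. An entirely symmetric argument using an open set separating $y$ from $x$ gives $x \neq ya^n$ for every $n \geq 1$. For the converse, assume the algebraic conditions and observe that $\mathrm{O}(x)$ is then an open neighborhood of $x$ missing $y$, while $\mathrm{O}(y)$ is an open neighborhood of $y$ missing $x$, verifying $T_1$ directly.

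There is no serious obstacle here; the work is essentially done by the identification of $\mathrm{O}(x)$ with the minimal open neighborhood, which converts a topological separation axiom into a transparent orbit-style condition. The only point worth handling carefully is the bookkeeping of the index set for $n$: one must be explicit that $x \neq y$ covers $n = 0$, so the stated condition for $n \geq 1$ is precisely what is missing, and this mirrors the parallel structure of the preceding $T_0$ theorem (where only one of the two exclusions is required).
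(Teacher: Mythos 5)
Your proof is correct and follows essentially the same route as the paper: both reduce the $T_1$ axiom to the condition $y \notin \mathrm{O}(x)$ and $x \notin \mathrm{O}(y)$ using the fact that $\mathrm{O}(x)$ is the smallest open set containing $x$. In fact your write-up is slightly more complete than the paper's, which only verifies the converse direction (algebraic condition implies $T_1$) and leaves the forward implication implicit.
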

\begin{proof}
	Consider $x,y \in R$ with $x\not =y$. By the assumption for any $n=1,2 \ldots$ $ y \not = xa^n$
	and $x \not = ya^n$. Hence $x \not \in \mathrm{O}(y)$ and $y \not \in \mathrm{O}(x)$. Now set $U = \mathrm{O}(x)$  and  $V = \mathrm{O}(y)$.
\end{proof}

\begin{theorem}	
	Topological space $(R,\tau_a)$ has $T_2$ property if and only if for any $x,y\in R$ 
	$x\not =y$, then $ya^m  \not = xa^n$, for $m,n=1,2, \ldots$.
\end{theorem}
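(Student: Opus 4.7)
The plan is to mirror the proof of the $T_1$ characterization, but now demanding that the two basic neighbourhoods $\mathrm{O}(x)$ and $\mathrm{O}(y)$ be \emph{disjoint} rather than merely each avoiding the other point. The key tool is Remark~2.8(1--2), which states that $\mathrm{O}(x)$ is the smallest open set containing $x$ and that the family $\{\mathrm{O}(x): x\in R\}$ is a base for $\tau_a$.

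First I would prove the following reduction: $(R,\tau_a)$ is $T_2$ if and only if $\mathrm{O}(x)\cap \mathrm{O}(y)=\emptyset$ whenever $x\neq y$. The nontrivial direction is the forward implication: if $U\ni x$ and $V\ni y$ are disjoint open sets, then by minimality $\mathrm{O}(x)\subseteq U$ and $\mathrm{O}(y)\subseteq V$, so $\mathrm{O}(x)\cap \mathrm{O}(y)\subseteq U\cap V=\emptyset$. Conversely, if the two orbits are already disjoint, they themselves serve as the separating open neighbourhoods, since each contains its respective point and is open.

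Next I would unwind the disjointness elementwise. Writing $\mathrm{O}(x)=\{x,xa,xa^2,\ldots\}$ and $\mathrm{O}(y)=\{y,ya,ya^2,\ldots\}$, the condition $\mathrm{O}(x)\cap \mathrm{O}(y)=\emptyset$ is exactly the conjunction of the three clauses $xa^n\neq ya^m$ for $m,n\geq 1$, together with $x\neq ya^m$ and $y\neq xa^n$ for $m,n\geq 1$ (and $x\neq y$). The last two clauses are the hypotheses of the preceding $T_1$ theorem and are implicit here, since $T_2$ always entails $T_1$; thus the statement reduces to the displayed condition $ya^m\neq xa^n$ for $m,n=1,2,\ldots$.

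The main obstacle is essentially bookkeeping rather than any deep argument: one must match the orbit description of $\mathrm{O}(x)$ (with the convention that $x$ itself belongs to $\mathrm{O}(x)$) against the elementwise inequality as stated, and confirm that the boundary cases where one exponent is zero are absorbed into the $T_1$ part already handled by the previous theorem.
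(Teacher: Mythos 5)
Your overall strategy is the same as the paper's: use the minimality of $\mathrm{O}(x)$ (Remark 2.8) to reduce the $T_2$ property to the disjointness $\mathrm{O}(x)\cap\mathrm{O}(y)=\emptyset$ for $x\neq y$, and then read off that disjointness elementwise. The forward direction is fine, and you are in fact more careful than the paper in noticing that elementwise disjointness of the two orbits involves not only the clauses $xa^n\neq ya^m$ with $m,n\geq 1$ but also the ``zero-exponent'' clauses $x\neq ya^m$ and $y\neq xa^n$.

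However, your disposal of those extra clauses in the converse direction is circular. There you are given only the displayed condition ($ya^m\neq xa^n$ for $m,n\geq 1$) and must produce disjoint neighbourhoods; you cannot appeal to ``$T_2$ entails $T_1$'' or to the preceding theorem, because whether the space is $T_1$ (or $T_2$) is exactly what remains to be proved, and the hypothesis of the $T_1$ theorem ($x\neq ya^n$ and $y\neq xa^n$) has not been granted to you. The gap is closable, but it needs an explicit argument: if $x=ya^{m}$ for some $m\geq 1$ (with $x\neq y$), then multiplying on the right by $a$ gives $xa^{1}=ya^{m+1}$, which already violates the displayed condition; symmetrically for $y=xa^{n}$. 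With that observation the displayed condition does force $\mathrm{O}(x)\cap\mathrm{O}(y)=\emptyset$, and your reduction then completes the proof. (For what it is worth, the paper's own converse --- ``take $\mathrm{O}(x)=U$ and $\mathrm{O}(y)=V$'' --- silently skips this same point.)
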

\begin{proof}
	Suppose that $(R,\tau_a)$ has the property of $T_2$ and consider $x,y\in R$  with $x\not =y$. So there are
	open sets $U$ including $x$ and $V$ including $y$ such that $U \bigcap V =\phi $ . Hence $O(x) \cap O(y) = \phi$. Then for all $m,n=1,2, \ldots$, $ya^m  \not = xa^n$. For the converse take $\mathrm{O}(x) = U$  and  $\mathrm{O}(y) = V$ .
\end{proof}
A Topological space $(R,\tau_a)$ is called regular if for any $x \in R$ and closed set $F \subseteq R\backslash x $ with $F \not = \{O(x)\}^c$ we have $O_h(F)  \cap  O(x)  =\phi$.
\begin{theorem}
	The topological space $(R,\tau_a)$ is regular if and only if for any $x\in R$ and any open	set $U \subseteq \mathrm{O}(x)$ including x, we have $x \in \mathrm{O}(x) \subseteq \overline{\mathrm{O}(x)}\subseteq  U$.
\end{theorem}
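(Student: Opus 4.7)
The plan is to prove both implications of the equivalence by exploiting the fact, recorded in the earlier remark, that $\mathrm{O}(x)$ is the smallest open set containing $x$ and that $\{\mathrm{O}(x):x\in R\}$ is a base for $\tau_a$. Because every open neighborhood of $x$ must contain $\mathrm{O}(x)$, the two inclusions $x\in\mathrm{O}(x)$ and $\mathrm{O}(x)\subseteq U$ in the conclusion are automatic, so the real content of the right-hand condition is the single inclusion $\overline{\mathrm{O}(x)}\subseteq U$.

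For the forward implication, I would assume $(R,\tau_a)$ is regular, fix $x$ and an open $U$ containing $x$, and set $F=R\setminus U$. Then $F$ is closed and $x\notin F$, so the regularity hypothesis, properly interpreted, produces an open set $V\supseteq F$ and an open set $W\ni x$ with $V\cap W=\emptyset$. By minimality of $\mathrm{O}(x)$, we have $\mathrm{O}(x)\subseteq W$, hence $\mathrm{O}(x)\cap V=\emptyset$; since $V$ is open and contains $F$, the complement $R\setminus V$ is closed and contains $\mathrm{O}(x)$, forcing $\overline{\mathrm{O}(x)}\subseteq R\setminus V\subseteq U$. For the converse I would take any closed $F$ with $x\notin F$, put $U=R\setminus F$, apply the hypothesis to get $\overline{\mathrm{O}(x)}\subseteq U$, and exhibit the disjoint open separators $\mathrm{O}(x)\ni x$ and $R\setminus\overline{\mathrm{O}(x)}\supseteq F$; this recovers whichever separation property the author is calling regularity.

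The main obstacle is not the computation itself but reconciling two awkward features of the statement. First, the preceding definition of ``regular'' uses an undefined symbol $\mathrm{O}_h(F)$ together with the cryptic side condition $F\neq\{\mathrm{O}(x)\}^c$; I would read $\mathrm{O}_h(F)$ as $\bigcup_{y\in F}\mathrm{O}(y)$, consistently with the earlier formula $\mathrm{O}(A)=\bigcup_{x\in A}\mathrm{O}(x)$, since that is the smallest open set around $F$ and is the natural choice for the $V$ above. Second, the theorem's hypothesis ``open set $U\subseteq\mathrm{O}(x)$ including $x$'' is almost certainly a typo for $\mathrm{O}(x)\subseteq U$, because under the literal reading the base property forces $U=\mathrm{O}(x)$ and the conclusion degenerates to ``$\mathrm{O}(x)$ is clopen''. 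Pinning down the intended reading, and then checking that the argument sketched above still goes through with the author's chosen variant of regularity, is the delicate part; the topological content is essentially immediate once the base $\{\mathrm{O}(x)\}$ is used.
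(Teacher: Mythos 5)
Your argument is correct and follows essentially the same route as the paper: in the forward direction you separate $x$ from $F=R\setminus U$ using $V=\mathrm{O}_h(F)=\bigcup_{y\in F}\mathrm{O}(y)$ and conclude $\overline{\mathrm{O}(x)}\subseteq R\setminus V\subseteq U$ via the closed set $[\mathrm{O}_h(R\setminus U)]^c$, and in the converse you take $U=R\setminus F$ and use $\mathrm{O}_h(F)\subseteq R\setminus\overline{\mathrm{O}(x)}$, exactly as the paper does. Your two interpretive corrections (reading $\mathrm{O}_h$ as the union of the basic open sets $\mathrm{O}(y)$, and reading the hypothesis as $\mathrm{O}(x)\subseteq U$) are both confirmed by how the paper's own proof actually proceeds.
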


\begin{proof}
	Suppose that $(R,\tau_a)$ is regular and $x \in U\in \tau_a$. Then
	\[\mathrm{O}_h(R \diagdown U) \cap \mathrm{O}(x) = \phi  \quad \quad(1)\] 
	On the other hand always
	$R \diagdown U  \subseteq \mathrm{O}(R \diagdown U) $,  so
	\[[\mathrm{O}_h(R \diagdown U)]^c \subseteq  U \quad \quad (2) \]
	
	Now from (1) and (2) we have $x \in \mathrm{O}(x) \subseteq \mathrm{O}_h[R \diagdown U)]^c\subseteq  U$. Since $\mathrm{O}_h[R \diagdown U)]^c$ is closed then
	$ \overline{\mathrm{O}(x)}\subseteq \mathrm{O}_h( [R \diagdown U]^c)$. Hence $x \in \mathrm{O}(x) \subseteq \overline{\mathrm{O}(x)}\subseteq \mathrm{O}_h( [R \diagdown U]^c)$.
	For the converse consider an element $x \in R$ and a closed set $F$ with $\mathrm{O}(x)^c \not = F\subseteq R\diagdown x $. These imply that
	$R\diagdown  F \not= \mathrm{O}(x)$ and $x \in R\diagdown F \in \tau_a$.  According to the assumption $x \in \mathrm{O}(x) \subseteq \overline{\mathrm{O}(x)}$ So
	$x \in \mathrm{O}(x)$ and $F \subseteq R\diagdown \overline{\mathrm{O}(x)}$ which imply that $\mathrm{O}_h(F) \subseteq R\diagdown \overline{\mathrm{O}(x)}$ . Since 
	$R\diagdown \overline{\mathrm{O}(x)} \cap \overline{\mathrm{O}(x)}= \phi$. Hence $\mathrm{O}_h(F) \cap \mathrm{O}(x)=\phi$.
\end{proof} 
In general topology a family $\{ G_\alpha : \alpha \in \sqcup \}$ of open sets in $R$ is called an (open) cover of $I$ if
$I \subseteq \cup G_\alpha$

We recall that  $a\in R$ is left (right) S-unital if there exists $x \in R$ such that $ax=x (xa=x)$.  
\begin{theorem}
	Let$(R, \tau_a)$ be a hausdorff space.  Then $a \in R$ is left S-unitral if and only if for any  open cover
	$\{ G_\alpha : \alpha \in \sqcup \}$ of $R$ there exist $x_0 \in R, \alpha_0 \in \sqcup  $ such that $x_0, x_0a \in G_{\alpha_0}$.
\end{theorem}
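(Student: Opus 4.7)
My plan is to treat the two directions separately, relying throughout on the fact that any $G \in \tau_a$ absorbs $a$ on the right, i.e., $Ga \subseteq G$.

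For the necessity, I would take a witness $x_1 \in R$ with $a x_1 = x_1$ and, given any open cover $\{G_\alpha : \alpha \in \sqcup\}$, pick $\alpha_0$ so that $x_1 \in G_{\alpha_0}$. Then the defining absorption property of $\tau_a$ immediately yields $x_1 a \in G_{\alpha_0} a \subseteq G_{\alpha_0}$, and setting $x_0 := x_1$ finishes this direction.

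For the sufficiency, I would specialize the covering hypothesis to the basis cover $\{\mathrm{O}(x) : x \in R\}$ identified in the earlier remark. This produces $x_0, y \in R$ with $x_0, x_0 a \in \mathrm{O}(y) = \{y, ya, ya^2, \ldots\}$, so that $x_0 = y a^i$ and $x_0 a = y a^j$ for some $i, j \geq 0$. Invoking the Hausdorff hypothesis, via the $T_2$ characterization established earlier (which forbids $ya^m = xa^n$ for distinct points), I would exploit the resulting rigidity of the orbit of $y$ to read off an algebraic identity that encodes $a x_0 = x_0$.

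The main obstacle is exactly this final extraction. The topology $\tau_a$ is fundamentally asymmetric: it is defined in terms of right multiplication by $a$, whereas left S-unitality concerns left multiplication. Converting the orbit-level containment $\{x_0, x_0 a\} \subseteq \mathrm{O}(y)$ into the left-multiplication identity $a x_0 = x_0$ is the crux of the argument, and I expect the Hausdorff condition to do this work by forcing $\mathrm{O}(y)$ to be rigid enough to deliver the desired fixed point. Should a single basis cover prove insufficient, my backup is to refine via finite intersections permitted inside $\tau_a$, sharpening the containment until it pins down the algebraic identity.
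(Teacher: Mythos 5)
Your necessity direction is fine (and in fact stronger than you need: since every $G\in\tau_a$ satisfies $Ga\subseteq G$, \emph{any} point $x_0$ of \emph{any} cover element already gives $x_0a\in G_{\alpha_0}$, so the S-unital hypothesis is not even used; the paper instead just observes that $xa=x$ makes $\mathrm{O}(x)=\{x\}$ contain both $x$ and $xa$). The genuine gap is in your sufficiency direction, and it is precisely the step you flag as ``the main obstacle.'' Specializing to the basis cover $\{\mathrm{O}(y):y\in R\}$ buys you nothing: the conclusion $x_0,x_0a\in\mathrm{O}(y)$ is satisfied trivially by taking $y=x_0$, since $\mathrm{O}(x_0)$ always contains $x_0a$. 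The covering condition is vacuously true for \emph{every} open cover of this space, so no amount of refining by finite intersections will extract an algebraic identity from it; the only possible source of the fixed point is the Hausdorff hypothesis itself, and your plan never actually deploys it. Your proposal is therefore a programme, not a proof.

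The paper closes this direction quite differently, by contraposition: assuming $xa\neq x$, Hausdorffness yields disjoint open sets $U_x\ni x$ and $W_x\ni xa$, and the continuity of $f(x)=xa$ (Theorem 2.5) lets one shrink $U_x$ so that $U_xa\subseteq W_x$. The family $\{U_x\}$ is then an open cover to which the covering hypothesis is applied: if some $U_{x_0}$ contained both $z$ and $za$, then $za\in U_{x_0}a\subseteq W_{x_0}$ would put $za$ in $U_{x_0}\cap W_{x_0}=\emptyset$, a contradiction. In other words, the hypothesis is used \emph{negatively}, against a cover built from the separation data, rather than positively on the orbit cover. Note also the left/right mismatch you correctly sensed: the paper defines left S-unital as $ax=x$ but its proof establishes the existence of $x$ with $xa=x$, which is the condition the topology $\tau_a$ actually controls; aiming for $ax_0=x_0$, as you do, is not something any argument in this framework will deliver directly.
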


\begin{proof}
	If $xa=x$, then it is trivial that $\mathrm{O}(x)=\{x,xa,xa^2, \ldots \}=\{x\}$ contains $x, xa$. 
	For the converse suppose that $xa\not =x $ for some $x\in R$. Since $(R, \tau_a)$
	is a hausdorff space, then for  $x \in R$ there are  $U_x$ including $x$ and $W_x$  including $xa$ such that
	$U_x \cap W_x = \phi $. Also, according to the Proposition 2.5, we can choose $U_x$ such that $U_x a \subseteq W_x$. Since
	$\{U_x | x\in R \}$ is a cover for $R$,  there are $z \in R$ and $x_0 \in R$ such that both of $z$ and $za$ are
	included in $U_{x_0}$. Hence $za \in U_{x_0} a \subseteq W_{x_0}$. So  $za \in U_{x_0}  \cap W_{x_0}$,
	which implies that $U_{x_0}  \cap W_{x_0} = \phi$.
\end{proof}
\begin{lemma}
	For any $x \in R$, $\overline{\mathrm{O}(x)}$ is  closed and open.
\end{lemma}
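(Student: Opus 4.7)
The plan is to reduce the condition $y \in \overline{\mathrm{O}(x)}$ to a purely algebraic identity, and then verify openness of the closure directly from the definition of $\tau_a$, that is, check that $\overline{\mathrm{O}(x)}\cdot a \subseteq \overline{\mathrm{O}(x)}$.

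First I would use the fact already noted in the excerpt that $\mathrm{O}(y)=\{y,ya,ya^2,\dots\}$ is the smallest open set containing $y$, and that $\{\mathrm{O}(z)\mid z\in R\}$ is a base for $\tau_a$. From this it follows that
\[
y\in\overline{\mathrm{O}(x)}\iff \mathrm{O}(y)\cap\mathrm{O}(x)\neq\varnothing\iff \exists\, m,n\geq 0:\ ya^{m}=xa^{n},
\]
because the smallest open neighborhood of $y$ must meet $\mathrm{O}(x)$. This characterization is the workhorse of the proof.

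Closedness of $\overline{\mathrm{O}(x)}$ is automatic, since in any topological space the closure of a set is closed (the earlier remark observes that intersections of closed sets are closed in $\tau_a$). So the content is openness, i.e., membership in $\tau_a$. Given $y\in\overline{\mathrm{O}(x)}$ with $ya^{m}=xa^{n}$, I would show $ya\in\overline{\mathrm{O}(x)}$ by splitting into two cases. If $m\geq 1$, then $(ya)a^{m-1}=ya^{m}=xa^{n}$, so the pair $(m-1,n)$ witnesses $ya\in\overline{\mathrm{O}(x)}$. If $m=0$, then $y=xa^{n}$, hence $ya=xa^{n+1}\in\mathrm{O}(x)\subseteq\overline{\mathrm{O}(x)}$. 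Either way $\overline{\mathrm{O}(x)}\cdot a\subseteq \overline{\mathrm{O}(x)}$, so $\overline{\mathrm{O}(x)}\in\tau_a$.

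There is no real obstacle here; the only subtle step is the equivalence $y\in\overline{\mathrm{O}(x)}\iff \mathrm{O}(y)\cap\mathrm{O}(x)\neq\varnothing$, which rests on $\mathrm{O}(y)$ being the minimal open neighborhood of $y$, a property that is particular to this topology (it is essentially an Alexandrov-type space). Once that translation is in hand, the rest is a one-line case analysis on the exponent $m$.
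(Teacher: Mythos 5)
Your proof is correct, but it takes a genuinely different route from the paper's. The paper argues pointwise via the continuity of right multiplication by $a$: given $y\in\overline{\mathrm{O}(x)}$ and an arbitrary open set $W$ containing $ya$, it produces an open set $U$ containing $y$ with $Ua\subseteq W$ (in effect $U=\hat{W}=\{z\in R\mid za\in W\}$), notes that $U$ meets $\mathrm{O}(x)$ because $y$ is in the closure, and uses $\mathrm{O}(x)a\subseteq\mathrm{O}(x)$ to conclude that $Ua$, hence $W$, meets $\mathrm{O}(x)$; therefore $ya\in\overline{\mathrm{O}(x)}$ and $\overline{\mathrm{O}(x)}a\subseteq\overline{\mathrm{O}(x)}$. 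You instead compute the closure explicitly: since $\mathrm{O}(y)$ is the minimal open neighbourhood of $y$ (the Alexandrov property of $\tau_a$), one has $y\in\overline{\mathrm{O}(x)}$ if and only if $ya^{m}=xa^{n}$ for some $m,n\geq 0$, after which absorption under right multiplication by $a$ is immediate --- indeed your case split on $m$ is unnecessary, since $(ya)a^{m}=ya^{m+1}=xa^{n+1}$ works uniformly. Both arguments ultimately verify the same condition $\overline{\mathrm{O}(x)}a\subseteq\overline{\mathrm{O}(x)}$, but yours buys an explicit description $\overline{\mathrm{O}(x)}=\{y\in R\mid \exists\, m,n\geq 0,\ ya^{m}=xa^{n}\}$ that is reusable elsewhere (e.g.\ in the separation-axiom theorems), whereas the paper's version is lighter on notation but, as written, does not justify that the set $U$ with $Ua\subseteq W$ can be chosen to contain $y$ --- a point your approach sidesteps entirely.
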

\begin{proof}
	It is clear that $\overline{\mathrm{O}(x)}$  is closed. Suppose that $W$ is an arbitrary open set including $ya$. So
	there is $U$ such that  $Ua \subseteq W$. Now let $y$ be a cluster point of $\mathrm{O}(x)$. So  $U \cap \mathrm{O}(x) \not = \phi $.
	Hence  $Ua \cap \mathrm{O}(x) = \phi $, which implies that  $W \cap \mathrm{O}(x) \not = \phi $. This guarantees that  $ya$ is a cluster
	point in $\mathrm{O}(x)$. Hence $ya \in \overline{\mathrm{O}(x)}$   which implies that $\overline{\mathrm{O}(x)}a \subseteq \overline{\mathrm{O}(x)}$.
\end{proof}
The following Lemma will be needed in the sequel.
\begin{lemma}[\cite{Engelking}, Theorem 6.1.1]
	For every topological space $X$ the following conditions are equivalent:
	\begin{enumerate}
		\item
		The space $X$ is connected;
		\item 
		The empty set and the whole space are the only closed-and-open subsets of the space $X$;
		\item
		If  $X=X_1\cup X_2$ and the sets $X_1\quad and \quad X_2$  are separated, then one of them is empty;
		\item
		Every continuous mapping  $f:X\longrightarrow D$  of the space $X$ to the two-point discrete space
		$D = \{0,1\}$ is constant, i.e., either $f(X) \subseteq {0}  \quad or \quad   f(X) \subseteq {1}$.
	\end{enumerate}
\end{lemma}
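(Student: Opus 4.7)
The plan is to establish the four conditions as equivalent via the standard cyclic chain $(1)\Rightarrow(2)\Rightarrow(3)\Rightarrow(4)\Rightarrow(1)$, since each single link is short and the full cycle is a well-worn route for this classical result.

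First I would handle $(1)\Rightarrow(2)$ by contraposition: if some $A\subseteq X$ is clopen with $A\neq\emptyset,X$, then $\{A,X\setminus A\}$ is a partition of $X$ into two disjoint non-empty open sets, contradicting connectedness. The reverse direction $(4)\Rightarrow(1)$ is equally direct: given a decomposition $X=U\cup V$ into disjoint non-empty open sets, the indicator function $\chi_V\colon X\to\{0,1\}$ has both $\chi_V^{-1}(0)=U$ and $\chi_V^{-1}(1)=V$ open, hence is continuous and non-constant, which violates $(4)$.

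For $(3)\Rightarrow(4)$, given continuous $f\colon X\to\{0,1\}$, put $X_i=f^{-1}(\{i\})$ for $i=0,1$. Both $X_i$ are closed (since $\{i\}$ is closed in $D$), so $\overline{X_i}=X_i$, making the pair automatically separated; they also cover $X$ disjointly, so $(3)$ forces one of them empty, i.e.\ $f$ is constant.

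The delicate link is $(2)\Rightarrow(3)$, which is the main obstacle and where I would be most careful. Suppose $X=X_1\cup X_2$ with $\overline{X_1}\cap X_2=X_1\cap\overline{X_2}=\emptyset$. From $X_1\cap\overline{X_2}=\emptyset$ we get $X_1\subseteq X\setminus\overline{X_2}$, and conversely $X\setminus\overline{X_2}\subseteq X\setminus X_2\subseteq X_1$; thus $X_1=X\setminus\overline{X_2}$ is open. By the symmetric argument $X_2$ is open, and since they partition $X$ both are clopen. Invoking $(2)$ then forces one of them to be empty. The subtle point here is recognizing that the separation hypothesis upgrades each $X_i$ to a clopen set; once that identification is in hand the implication is immediate.
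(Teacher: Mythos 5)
Your proof is correct: each of the four implications in the cycle $(1)\Rightarrow(2)\Rightarrow(3)\Rightarrow(4)\Rightarrow(1)$ is established soundly, including the key observation in $(2)\Rightarrow(3)$ that the separation hypothesis forces $X_1=X\setminus\overline{X_2}$ and hence that each $X_i$ is clopen. Note, however, that the paper itself gives no proof of this lemma at all --- it is imported verbatim with a citation to Engelking (Theorem 6.1.1) --- so your argument is simply the standard textbook proof of that cited result, and there is nothing in the paper to compare it against.
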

\begin{corollary}
	$(R, \tau_a)$ is disconnected.
\end{corollary}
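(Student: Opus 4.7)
The plan is to combine the two preceding lemmas. The Engelking-style lemma just recalled states, as condition~(2), that connectedness is equivalent to $\emptyset$ and $R$ being the only clopen subsets; so it suffices to exhibit a clopen set that is neither empty nor all of $R$. The lemma immediately before that produces a natural supply of clopen sets: $\overline{\mathrm{O}(x)}$ is clopen for every $x\in R$. Since $x\in \mathrm{O}(x)\subseteq \overline{\mathrm{O}(x)}$, none of these sets is empty, so the entire task reduces to singling out one $x\in R$ with $\overline{\mathrm{O}(x)}\neq R$.

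A convenient starting point is $x=0$. Then $\mathrm{O}(0)=\{0\}$, and by unwinding Definition~2.3 together with the fact that $\mathrm{O}(y)=\{y,ya,ya^{2},\ldots\}$ is the smallest open neighbourhood of $y$, one gets $y\in \overline{\{0\}}$ precisely when $ya^{n}=0$ for some $n\geq 0$. Specialising to $y=1$, we see $1\notin \overline{\{0\}}$ as soon as $a$ is not nilpotent; in that case $\overline{\{0\}}$ is a proper, non-empty, clopen subset of $R$, and the Engelking lemma yields disconnectedness at once.

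The main obstacle is the case when $a$ is nilpotent (in particular when $a=0$), in which $\overline{\{0\}}=R$ and the argument above collapses to the trivial clopen set. In this regime one must pick a different basepoint. A prototypical instance is $a=e$ idempotent: by Remark~2.8 we have $\mathrm{O}(x)=\{x\}$ for every $x\in Re$, so each such singleton is already clopen and the space is split by $Re$ against its complement; a similar analysis can presumably be pushed through for general nilpotent $a$ by examining the successive images $Ra,Ra^{2},\ldots$ and their annihilators to locate a non-trivial $\overline{\mathrm{O}(x)}$. Pinning down a uniform choice of clopen set valid for every $a$ (with $R\neq 0$) is, I expect, the most delicate step of the proof, and it would not surprise me if a mild hypothesis such as $R\neq 0$ and $a\neq 0$ is implicitly needed.
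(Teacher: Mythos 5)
The paper's own proof is precisely your first paragraph with the crucial step omitted: it observes that $\overline{\mathrm{O}(x)}$ is closed-and-open (Lemma 2.13) and immediately invokes the Engelking lemma, without ever checking that some $\overline{\mathrm{O}(x)}$ is a \emph{proper nonempty} subset of $R$. You are right that this check is the whole content of the corollary, and your treatment of it is the more honest one. Your analysis of the basepoint $x=0$ is correct: $\mathrm{O}(0)=\{0\}$, and since $\mathrm{O}(y)$ is the smallest open set containing $y$, one has $y\in\overline{\{0\}}$ iff $0\in\mathrm{O}(y)$ iff $ya^{n}=0$ for some $n\geq 0$; hence $1\notin\overline{\{0\}}$ exactly when $a$ is not nilpotent, and in that case $\overline{\{0\}}$ is a nonempty proper clopen set and the space is disconnected. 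Up to this point your argument is complete and strictly better than the paper's.

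Where your proposal (understandably) stalls, however, no rescue is possible: when $a$ is nilpotent the corollary is simply false, so no choice of basepoint and no ``analysis of $Ra, Ra^{2},\ldots$'' can produce a nontrivial clopen set. Indeed, if $a^{n}=0$ then $0=xa^{n}\in\mathrm{O}(x)$ for every $x\in R$, so every nonempty open set contains $0$; consequently any two nonempty open sets intersect, the space is hyperconnected, and the only clopen sets are $\emptyset$ and $R$. (The case $a=0$ is the particular-point topology at $0$, which is connected.) Note also that your aside about idempotents does not belong to this regime: a nonzero idempotent is never nilpotent, so for $a=e\neq 0$ your $x=0$ argument already applies. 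The correct conclusion is that $(R,\tau_a)$ is disconnected \emph{if and only if} $a$ is not nilpotent; the implicit hypothesis you suspected is needed is ``$a$ not nilpotent,'' which is stronger than your guess of $R\neq 0$ and $a\neq 0$. So your proposal identifies a genuine gap in the paper's proof, closes it in the only case where it can be closed, and the residual difficulty you flag is a defect of the statement rather than of your argument.
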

\begin{proof}
	Since $x \in R$, $\overline{\mathrm{O}(x)}$ is  closed-and-open. Thus  $(R, \tau_a)$ is disconnected by applying the above theorem.
\end{proof}
\begin{theorem}
	Let $(R, \tau_a)$ be a ring with local Topology.  Consider the following conditions.
	\begin{enumerate}
		\item
		For some $x_0$, the set $\mathrm{O}(x_0)$ is compact;
		\item
		If $xa \not = x$, then $x \not \in \overline{\mathrm{O}(xa^2)} $.
	\end{enumerate}
If $(R, \tau_a)$ satisfies these conditions, then there is a cluster point $y$ in $\mathrm{O}(x_0)$ such that $ya=y$.
\end{theorem}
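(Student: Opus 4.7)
The plan is to combine the compactness of $\mathrm{O}(x_0)$ with the clopen structure of orbit closures established in the preceding lemma via a finite intersection property argument; condition (2) will then force the resulting common point to be a fixed element of multiplication by $a$.

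First, for each integer $n \ge 0$ I set $F_n := \overline{\mathrm{O}(x_0 a^n)}$. The preceding lemma gives that each $F_n$ is closed (in fact closed-and-open), and since $\mathrm{O}(x_0 a^{n+1}) \subseteq \mathrm{O}(x_0 a^n)$ the sets $F_n$ form a descending chain. Restricting to the compact subspace $\mathrm{O}(x_0)$, each $F_n \cap \mathrm{O}(x_0)$ is closed in $\mathrm{O}(x_0)$ and contains $x_0 a^n$, hence is nonempty. The finite intersection property then yields $\bigcap_{n \ge 0}\bigl(F_n \cap \mathrm{O}(x_0)\bigr) \neq \emptyset$; I pick $y$ in this intersection and, using $y \in \mathrm{O}(x_0)$, write $y = x_0 a^{n_0}$ for the minimal such $n_0 \ge 0$.

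Next, I would verify that $ya = y$. A direct computation gives $\mathrm{O}(ya^2) = \{x_0 a^{n_0+2}, x_0 a^{n_0+3}, \ldots\} = \mathrm{O}(x_0 a^{n_0+2})$, so $\overline{\mathrm{O}(ya^2)} = F_{n_0+2}$. By construction $y \in F_{n_0+2}$; were $ya \neq y$, condition (2) would force $y \notin \overline{\mathrm{O}(ya^2)} = F_{n_0+2}$, a contradiction. Hence $ya = y$. To verify that $y$ qualifies as a cluster point in the paper's sense (a limited point of $\mathrm{O}(x_0)$), given any $I \in \tau_a$ with $y \in I$, the element $z := x_0$ lies in $\mathrm{O}(x_0) \setminus \{y\}$ by minimality of $n_0 \ge 1$ and satisfies $z a^{n_0} = y \in I$; the degenerate case $n_0 = 0$ collapses $\mathrm{O}(x_0)$ to $\{y\}$ with the fixed-point conclusion automatic.

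The main obstacle is marrying the compactness/FIP argument with the clopen lemma correctly: one must be careful that $F_n$ is formed as a closure in $R$ but then intersected with $\mathrm{O}(x_0)$ before invoking compactness, and that condition (2) is applied with the correct tail orbit $\mathrm{O}(ya^2)$, not $\mathrm{O}(ya)$ or $\mathrm{O}(y)$, so that the contradictory membership in $F_{n_0+2}$ is immediate. Once the descending chain $F_n \cap \mathrm{O}(x_0)$ is in hand, condition (2) converts the compactness output into the fixed-point statement essentially for free.
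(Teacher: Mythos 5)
Your proof is correct, and it takes a genuinely different route from the paper. The paper runs the classical minimal-invariant-set argument: it applies Zorn's Lemma to the family of nonempty closed subsets of $\overline{\mathrm{O}(x_0)}$ that are mapped into themselves by $f(x)=xa$, extracts a minimal such $L$, and then uses condition (2) to contradict $x\neq xa$ for $x\in L$ via $L=\overline{\mathrm{O}(xa^2)}$. You instead exploit the very special structure of the orbits: the tail closures $F_n=\overline{\mathrm{O}(x_0a^n)}$ form a descending chain of closed sets (clopen, by the preceding lemma), compactness of $\mathrm{O}(x_0)$ plus the finite intersection property hands you a point $y=x_0a^{n_0}$ lying in every $F_n$, and in particular in $F_{n_0+2}=\overline{\mathrm{O}(ya^2)}$, so the contrapositive of condition (2) forces $ya=y$ immediately. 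Your version avoids Zorn's Lemma entirely, pins down the fixed point concretely as an element of $\bigcap_n F_n$, and is the only one of the two that even attempts to verify the ``cluster point'' clause of the conclusion (the paper's proof stops at $xa=x$). One caveat you already flag but should state as a limitation rather than a resolution: in the degenerate case $n_0=0$ one gets $\mathrm{O}(x_0)=\{y\}$, and under the paper's definition of limited point a singleton has no cluster points, so the ``cluster point'' clause of the theorem is simply not attainable there; this is a defect of the theorem's statement shared by the paper's own proof, not a gap in your argument for the fixed-point assertion.
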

\begin{proof}
	Consider $M=\{A \subseteq  \overline{\mathrm{O}(x_0) } | A \quad and \quad \bar{A} \in \tau_a \}$.  
	Lemma 2.13 implies that  $M$ is nonempty. Let $M$ be partially ordered by the set inclusion and let $N$  be a chain in $M$. Put $M_0= \cap\{A | A \in N \}  $. $M_0$ is closed nonempty subset of $\mathrm{O}(x_0)$ by the compactness of
	$\overline{\mathrm{O}(x_0)} $ and it is a lower bound of $N$. 
	Using Zorn's Lemma we can find a subset  $L$ of $M$ which is
	minimal with respect to being nonempty, closed and mapped into itself by $f$. By the minimality of $L$ we have $T( L) =  L$.
	Let $x$ be an element in  $L$ and suppose that $x \not = xa$. Then $x \not \in \mathrm{O}(xa^2)$ and so the continuity
	of f implies that the set $\mathrm{O}(xa^2)$ is mapped into itself by $f$ and the minimality of  $L$ implies that
	$L=\overline{\mathrm{O}(xa^2)}$. On the other hand we have $x \in  L$. It follows that $x  \in \overline{\mathrm{O}(xa^2)} $ which is desire contradiction. Therefore $xa=x$.
\end{proof}
\section{Projection Invariant  Topology }
In following we focus on Projection Invariant  Topology and idempotents of rings.

\begin{theorem}
	Let $(R, \tau_e)$ be a ring with projection invariant Topology. Then $Re$ is an open and dense left ideal. 
\end{theorem}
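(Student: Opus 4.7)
The plan is to establish the three assertions---that $Re$ is a left ideal, that it belongs to $\tau_e$, and that its closure is all of $R$---in turn; each reduces to a short computation using $e^2=e$ and the basis description of $\tau_e$ given earlier.

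First, $Re$ is a left ideal by the standard verification: for $s\in R$ and $re\in Re$, $s(re)=(sr)e\in Re$, and sums of elements of $Re$ lie in $Re$. Second, to see that $Re$ is open in $\tau_e$ I check the defining condition $(Re)e\subseteq Re$; this is immediate from idempotency, since $(re)e = re^2 = re\in Re$.

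For density, I would argue basis-by-basis, using the fact recorded in the earlier remark that $\{\mathrm{O}(x) : x\in R\}$ is a base for $\tau_e$. Because $e^2=e$, the powers collapse: $xe^n = xe$ for every $n\geq 1$, so the basic open set $\mathrm{O}(x)=\{x,xe,xe^2,\ldots\}$ reduces to $\{x,xe\}$. Now $xe\in Re$ holds tautologically, so every nonempty basic open set meets $Re$. Consequently every nonempty open set in $(R,\tau_e)$ meets $Re$, which means $\overline{Re}=R$.

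No serious obstacle arises here: the substance of the argument is the reduction of $\mathrm{O}(x)$ to $\{x,xe\}$ that is forced by $e^2=e$, and the observation that this set visibly intersects $Re$ at $xe$. The only point worth noting is that one must invoke the basis property from the earlier remark rather than attempting to verify density by working with arbitrary open sets directly.
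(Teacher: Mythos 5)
Your proposal is correct and follows essentially the same route as the paper: openness comes from the idempotency computation $(re)e=re$ (the paper phrases this via $\mathrm{O}(x)=\{x\}$ for $x\in Re$, you via the defining condition $(Re)e\subseteq Re$, which is if anything more direct), and density comes from the observation that the minimal open set around any $x$ contains $xe\in Re$ (the paper packages this as a computation of the derived set $(Re)'$, you as the statement that every nonempty basic open set meets $Re$). The two arguments are the same computation in slightly different topological clothing.
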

\begin{proof}
	let $x \in Re$. Then $x=re, r\in R$ and $xe=x$. So we have $\mathrm{O}(x)=\{x\} \in \tau_e$. Then each element is interior and that  $Re$ is open.   
	Now we show that       $\overline{Re} = R$. If $x \in Re$, then $\mathrm{O}(x) \cap Re\setminus x = \phi$. Then there is no cluster point in $Re$. Thus
	$(Re)' \subseteq R \setminus Re$.	
	If $x \in R\setminus Re $, then $x \not = xe$, so for every open set $U$ including $x$, $U \cap Re \setminus x \not = \phi$, then $x \in (Re)'$,
	so $ R\setminus Re \subseteq (Re)'$.
	$(Re)'=R\setminus Re$,	then $\overline{Re} = R$.	
\end{proof}

\bibliographystyle{amsplain}
\begin{theorem}
	In the Topological space $(R, \tau_e)$, $R(1-e)$  is an open and not closed left ideal.
\end{theorem}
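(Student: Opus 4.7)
I will split the proof into (i) verifying that $R(1-e)$ is a left ideal, (ii) showing it is open in $\tau_e$, and (iii) establishing that it is not closed. Parts (i) and (ii) should be immediate from the absorption definition of $\tau_e$; step (iii) is where I expect the real work to lie.

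For (i) and (ii): since $1-e$ is itself idempotent, $R(1-e)$ is a principal left ideal. The key computation for openness is $R(1-e)\cdot e=\{0\}$, because $(r(1-e))e=re-re^2=0$ for every $r\in R$. Since $\{0\}\subseteq R(1-e)$, this places $R(1-e)$ in $\tau_e$. Equivalently, every $x\in R(1-e)$ has minimal open neighborhood $\mathrm{O}(x)=\{x,xe\}=\{x,0\}\subseteq R(1-e)$, matching the last item of Remark 2.6.

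For (iii), the natural strategy is to exhibit a point $x\in R\setminus R(1-e)$ that is a limit point of $R(1-e)$ in the sense of Definition 2.3, or equivalently to show that $R\setminus R(1-e)\notin\tau_e$. Using the Pierce decomposition $R=Re\oplus R(1-e)$, any such $x$ has the form $x=a+b$ with $a\in Re\setminus\{0\}$ and $b\in R(1-e)$. I would start with the distinguished candidates $x=e$ and $x=1$, whose orbits are $\mathrm{O}(e)=\{e\}$ and $\mathrm{O}(1)=\{1,e\}$, and test whether every open $I\ni x$ meets $R(1-e)\setminus\{x\}$ through some power $ye^n$.

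The main obstacle is an isolation phenomenon produced by the Pierce decomposition itself. For any $y\in R(1-e)$ one has $ye=0$, so $ye^n=0$ for $n\ge 1$, and the limit-point test collapses to requiring $0\in I$ for every open $I\ni x$. Yet the minimal open neighborhood $\mathrm{O}(x)=\{x,xe\}$ of an $x\in R\setminus R(1-e)$ contains $xe=a\in Re\setminus\{0\}$, so $0\notin\mathrm{O}(x)$. Overcoming this isolation is the crux of the argument: any successful proof of non-closedness will have to exploit structure beyond the naive $e$-orbit, for instance by using non-centrality of $e$ so that the closure of $R(1-e)$ captures elements through two-sided actions, or by sharpening the notion of limit point so that the Pierce-like splitting of $R$ no longer automatically makes $R(1-e)$ clopen.
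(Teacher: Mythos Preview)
Your arguments for (i) and (ii) are correct and coincide with the paper's: $R(1-e)e=\{0\}\subseteq R(1-e)$, so $R(1-e)\in\tau_e$, and each $x\in R(1-e)$ has $\mathrm{O}(x)=\{x,0\}\subseteq R(1-e)$.

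Your hesitation on (iii) is justified, and the obstacle you isolate is fatal rather than merely technical: the assertion ``not closed'' is false. Since $R(1-e)=\{x\in R: xe=0\}$, the complement is
\[
R\setminus R(1-e)=\{x\in R: xe\neq 0\},
\]
and for any such $x$ one has $(xe)e=xe\neq 0$, so $xe\in R\setminus R(1-e)$. Thus $(R\setminus R(1-e))e\subseteq R\setminus R(1-e)$, the complement lies in $\tau_e$, and $R(1-e)$ is closed in $(R,\tau_e)$ for every idempotent $e$.

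The paper's own proof breaks at exactly the step your ``isolation phenomenon'' predicts. It claims that for $x$ with $xe\neq x$ and $xe\neq 0$ the minimal neighbourhood $\{x,xe\}$ meets $R(1-e)\setminus\{x\}$. But $x\notin R(1-e)$ by hypothesis, and $xe\in Re\setminus\{0\}$ forces $xe\notin R(1-e)$ because $Re\cap R(1-e)=\{0\}$; hence the intersection is empty and the displayed derived-set computation is incorrect. No appeal to non-centrality of $e$ or to a sharpened limit-point notion will help, since the complement is $\tau_e$-open outright. So you should not search for a way around the obstruction you found; it is a counterexample to the theorem as stated.
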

\begin{proof}
	For all $x \in R(1-e)$, we have  open set $U= \{x,0\}$, including $x$, so $R(1-e)$ is open.
	We will show that $[R(1-e)]'=[R\setminus Re] \cap [R\setminus R(1-e)]$.
	If $x \in Re$, then $x=xe$, so   $U= \{x\}$ is an open set and $U \cap R(1-e) = \phi$. Then
	$[R(1-e)]' \subseteq [R\setminus Re] \cap [R\setminus R(1-e)]$. For the converse if $x \in[R\setminus Re] \cap [R\setminus R(1-e)] $, then
	we have $xe \not = x, xe \not = 0$, so $U= \{x,xe\} \cap R(1-e) \setminus x \not = \phi $, so $x \in [R(1-e)]'$. Then
	$[R(1-e)]'=[R\setminus Re] \cap [R\setminus R(1-e)]$.	
\end{proof}

Therefore, in pierce decomposition $R=Re \oplus R(1-e)$ of $(R,\tau_e)$,  $Re$ is an open and dense left ideal, and  $R(1-e)$, is an open and not closed left ideal.

In the definition of Local Topology we can use  left product and the same result will be repeated. (use $a^nx$, instead of $xa^n$)

\begin{example}
	Suppose that $\tau_e=\{ I\subseteq R | eI \subseteq R \}$, and $F$ is a field. Then
	$ R= \begin{pmatrix}  
		F&F \\
		0&F
	\end{pmatrix}$, for idempotent $e^2=e$,
	$ e= \begin{pmatrix}  
		1&1 \\
		0&0
	\end{pmatrix}$,
	In $(R,\tau_e)$ we have 
	\[ R= \begin{pmatrix}  
		F&F \\
		0&F
	\end{pmatrix} =   \begin{pmatrix}  
		1&1 \\
		0&0
	\end{pmatrix} R \oplus   \begin{pmatrix}  
		0&0 \\
		0&1
	\end{pmatrix}R,\]
	now in this ring 
	$  \begin{pmatrix}  
		1&1 \\
		0&0
	\end{pmatrix}R=  \begin{pmatrix}  
		F&F \\
		0&0
	\end{pmatrix}$,
	is an open and dense ideal.
\end{example}

\end{document}